\newcommand{\F}{\mathbb{F}}
\newcommand{\Fbar}{{\overline{\F}}}
\newcommand{\Q}{\mathbb{Q}}
\newcommand{\Z}{\mathbb{Z}}
\newcommand{\Qbar}{{\overline{\Q}}}
\newcommand{\Ebar}{{\overline{E}}}
\newcommand{\rhobar}{{\overline{\rho}}}
\newcommand{\fp}{\mathfrak{p}}
\DeclareMathOperator{\Aut}{Aut}
\DeclareMathOperator{\Det}{Det}
\DeclareMathOperator{\Gal}{Gal}
\newcommand{\GL}{\operatorname{GL}}
\newcommand{\SL}{\operatorname{SL}}
\numberwithin{equation}{section}
\newtheorem{theorem}{Theorem}
\newtheorem{lemma}{Lemma}
\theoremstyle{definition}
\theoremstyle{remark}
\definecolor{darkgreen}{rgb}{0,0.5,0}
\begin{document}

\title[The generalized Fermat equation]%
      {On the Fermat-type equation $x^3 + y^3 = z^p$}

\author{Nuno Freitas}

\date{\today}

\begin{abstract}
  We prove that the Fermat-type equation $x^3 + y^3 = z^p$ has no solutions
  $(a,b,c)$ satisfying $abc \ne 0$ and $\gcd(a,b,c)=1$ when $-3$ is not a 
  square mod~$p$. This improves to approximately $0.844$ the Dirichlet density of the
  set of prime exponents to which the previous equation is known to not have such solutions.
  
  For the proof we   develop a criterion of independent interest to decide if two elliptic curves with certain type of potentially good reduction at 2 have symplectically or anti-symplectically isomorphic $p$-torsion modules.
\end{abstract}

\maketitle

\section{Introduction}

In this paper we consider the Fermat-type equation
\begin{equation} \label{E:gfe}
 x^3 + y^3 = z^p 
\end{equation}
which is a particular case of the {\it Generalized Fermat Equation} (GFE)
\[
  x^p + y^q = z^r, \qquad p, q, r \in \Z_{\geq 2}, \qquad 1/p + 1/q + 1/r < 1.  
\]
Here we are concerned with solutions $(a,b,c)$ which are {\it non-trivial} and {\it primitive}, that is $abc \ne 0$ and $\gcd(a,b,c) = 1$, respectively. To the triple of exponents $(p,q,r)$ we call the {\it signature} of the equation.

The equation \eqref{E:gfe} is one of the few instances of the GFE where there is a known Frey curve
defined over $\Q$ attached to it. The other few signatures with available rational Frey curves are
$(p,p,p)$, $(p,p,2)$, $(p,p,3)$, $(5,5,p)$, $(7,7,p)$, $(2,3,p)$ and $(4,p,4)$ (see \cite{darmon-cr} for their explicit definitions).\footnote{There are also Frey curves attached to signatures of the form $(r,r,p)$ and $(2\ell,2m,p)$ 
but defined over totally real fields (see \cite{F} and \cite{AnniSiksek}).} 
However, only for the signatures $(p,p,p)$, $(4,p,4)$, $(p,p,2)$ and $(p,p,3)$  
the existence of a Frey curve led to a full resolution of the corresponding equation. 
The first due to the groundbreaking work of Wiles \cite{Wiles} and the other three due to work of Darmon \cite{darmon4p4} 
and Darmon-Merel \cite{DM}.
Among the remaining signatures, equation \eqref{E:gfe} is the one where most progress was achieved so far,
due to the work of Kraus \cite{Kraus1998} and Chen--Siksek \cite{ChenSiksek}. 

\begin{theorem}[Kraus 1998] Let $p \geq 17$ be a prime and $(a,b,c)$ be a non-trivial primitive solution to \eqref{E:gfe}. Then $\upsilon_2(a) = 1$, $\upsilon_2(b) = 0$, $\upsilon_2(c) = 0$, and $\upsilon_3(c) \geq 1$.  

Moreover, there are no solutions for exponents $p$ satisfying  $17 \leq p < 10^4$.

\label{T:Kraus}
\end{theorem}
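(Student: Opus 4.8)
The plan is to apply the modular method. Let $(a,b,c)$ be a non-trivial primitive solution to \eqref{E:gfe}; then $a,b,c$ are pairwise coprime, and reducing modulo $2$ shows that either exactly one of $a,b$ is even (with $c$ odd) or $a,b$ are both odd (with $c$ even). After possibly swapping $a$ and $b$ I would attach to the solution the Frey curve
\[
E=E_{a,b}\colon\quad y^2=x^3+3ab\,x+(b^3-a^3),
\]
with $c_4(E)=-144\,ab$ and, using $4(3ab)^3+27(b^3-a^3)^2=27(a^3+b^3)^2$, discriminant $\Delta(E)=-2^4\,3^3\,c^{2p}$. Since $\gcd(ab,c)=1$, the curve $E$ has good reduction at every prime $\ell\nmid 6c$ and multiplicative reduction with $p\mid\upsilon_\ell(\Delta(E))$ at every prime $\ell\mid c$ with $\ell\nmid 6$. (If $c$ has no such prime factor, i.e.\ $|c|$ is a product of powers of $2$ and $3$, then $(a+b)(a^2-ab+b^2)=c^p$ with $a^2-ab+b^2$ odd forces $a^2-ab+b^2$ to be a power of $3$, and an elementary descent disposes of this case directly.)

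Next I would show that $\rhobar:=\rhobar_{E,p}$ is irreducible for $p\ge17$. Choose a prime $\ell\mid c$ with $\ell\nmid 6$ of multiplicative reduction; since $p\mid\upsilon_\ell(\Delta(E))$, the action of $I_\ell$ on $\rhobar$ is trivial, and likewise at $p$ if $p\mid c$, while $\rhobar|_{I_p}$ is controlled by good reduction otherwise. If $\rhobar$ were reducible, its Jordan--H\"older characters would be unramified outside $\{2,3,p\}$ with product the mod-$p$ cyclotomic character, and the standard argument then forces a rational $p$-isogeny on $E$, excluded by Mazur's theorem on isogenies for $p\ge 17$ (the borderline exponents being handled by the explicit list of the finitely many possible $j$-invariants). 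Granting irreducibility, modularity of $E/\Q$ together with Ribet's level-lowering theorem produces a weight-$2$ newform $f$ on $\Gamma_0(N)$ with $\rhobar$ isomorphic to the mod-$p$ representation attached to $f$, where $N$ is the Artin conductor of $\rhobar$; the multiplicative primes $\ell\mid c$, $\ell\nmid 6$, are stripped away, so $\supp(N)\subseteq\{2,3\}$ with the exponents at $2$ and $3$ equal to the conductor exponents of $E$ there.

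The heart of the proof is then a case analysis at $2$ and $3$. Running Tate's algorithm on $E_{a,b}$ — reading off Papadopoulos's tables, since the model above need not be minimal at $2$ — in each case cut out by $(\upsilon_2(a),\upsilon_2(b),\upsilon_2(c))$ and by the $3$-adic valuations of $a,b,c$, I would compute the conductor exponents $f_2$ and $f_3$, hence the short explicit list of candidate levels $N$. For every configuration other than $\upsilon_2(a)=1$, $\upsilon_2(b)=\upsilon_2(c)=0$, $\upsilon_3(c)\ge 1$ — in particular the case $c$ even, the case $\upsilon_2(a)\ge 2$, and the case $3\nmid c$ — I would check that the resulting level supports no newform compatible with $\rhobar$: either $S_2(\Gamma_0(N))^{\mathrm{new}}=0$, or each newform there is rational and corresponds to an elliptic curve whose local behaviour at $2$ or $3$ (rational torsion, or the wrong reduction type) is incompatible with $E$, which is detected by comparing the traces $a_q$ for a small auxiliary prime $q$. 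This leaves only the valuation pattern of the statement, which is the first assertion.

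Finally, in the one surviving configuration the level $N_0$ is a fixed integer supported on $\{2,3\}$ whose newform space is one-dimensional, spanned by the form of an explicit elliptic curve $E_0/\Q$, so that a solution with exponent $p$ would force $\rhobar_{E,p}$ to be isomorphic to $\rhobar_{E_0,p}$. To obtain the last sentence I would apply Kraus's method of auxiliary primes: for each $p$ with $17\le p<10^4$ one exhibits a prime $q\equiv 1\pmod p$ such that $a_q(E_0)$ agrees with none of the finitely many values $a_q(E)$ coming from the reductions modulo $q$ of the Frey curves of putative solutions, contradicting the isomorphism and killing that exponent. The step I expect to be the genuine obstacle is precisely the reduction-type bookkeeping at $\ell=2$: because the Frey model is non-minimal there one must run Tate's algorithm with care in every parity case and then systematically empty out the candidate newform spaces; modularity and level-lowering enter only as black boxes, and the range $17\le p<10^4$ is then a finite, if lengthy, computation.
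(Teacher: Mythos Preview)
The paper does not prove this theorem; it is cited as a result of Kraus \cite{Kraus1998} and used as a black box, with only specific steps of Kraus's argument (parts (2.2) and (3.1) of \cite[Lemma~4.1]{Kraus1998}) quoted later when local invariants of the Frey curve are needed. Your outline is a faithful sketch of Kraus's original modular-method proof, and the details you give --- the Frey curve $E_{a,b}$, its discriminant $-2^4\cdot 3^3\, c^{2p}$, irreducibility for $p\ge 17$, the conductor case analysis at $2$ and~$3$, the single surviving newform obstruction (the curve $72a1$ in the paper's notation), and the auxiliary-prime elimination for $17\le p<10^4$ --- are consistent with what the paper records and cites.
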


\begin{theorem}[Chen--Siksek 2009] For a set of primes $\mathcal{L}$ with density $0.681$ the equation
\eqref{E:gfe} has no non-trivial primitive solutions. The primes in $\mathcal{L}$ are
determined by explicit congruence conditions, for example $p \equiv 2,3$ mod~$5$. 

Moreover, there are no solutions for exponents $p$ satisfying $3 \leq p \leq 10^7$. 
\end{theorem}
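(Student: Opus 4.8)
\emph{Proof proposal.} The plan is to run the modular method, which is currently the only approach to \eqref{E:gfe} available for all exponents $p$. Let $(a,b,c)$ be a non-trivial primitive solution and first suppose $p\geq 17$, so that Theorem~\ref{T:Kraus} applies: $\ord_2(a)=1$, $\ord_2(b)=\ord_2(c)=0$ and $\ord_3(c)\geq 1$ (the exponents $p\in\{3,5,7,11,13\}$ are handled separately below). Primitivity forces $3\nmid a$ and $3\nmid b$, so from $a^3+b^3=(a+b)(a^2-ab+b^2)=c^p$ and $\gcd(a+b,\,a^2-ab+b^2)\mid 3$ one finds $\ord_3(a^2-ab+b^2)=1$ and $\ord_3(a+b)=p\,\ord_3(c)-1$; hence $a+b=3^{p\ord_3(c)-1}s^p$ and $a^2-ab+b^2=3t^p$, with $st$ the prime-to-$3$ part of $c$. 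Since $4(a^2-ab+b^2)=(2a-b)^2+3b^2$ and $3\mid 2a-b$, setting $2a-b=3u$ produces the auxiliary equation
\[
  b^2+3u^2=4t^p,\qquad b,\ u,\ t\ \text{odd},
\]
whose factorisation over $\Q(\sqrt{-3})$ underlies the construction of a Frey elliptic curve $E=E_{(a,b)}/\Q$ in the style of Kraus \cite{Kraus1998}; Chen--Siksek additionally bring in a second Frey curve for a multi-Frey argument \cite{ChenSiksek}. Since the valuations above are fixed, Tate's algorithm gives $E/\Q$ a conductor $2^{\alpha}3^{\beta}\prod_{\ell\mid abc,\,\ell\nmid 6}\ell$ with $\alpha,\beta$ fixed, a minimal discriminant whose $\ell$-adic valuation is a multiple of $p$ at every such $\ell$, and good or multiplicative reduction away from $6$; thus $\rhobar_{E,p}\colon\GalQ\to\GL_2(\F_p)$ is finite at $p$ (for $p\geq 17$) with conductor dividing a fixed number $2^{\alpha'}3^{\beta'}$ outside $p$.

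Next invoke modularity and level lowering. The curve $E/\Q$ is modular by \cite{Wiles} and its extensions. For $p\geq 17$ the representation $\rhobar_{E,p}$ is irreducible: an $\F_p$-stable line in $E[p]$ would, via the multiplicative reduction and $p\mid\ord_\ell(\Delta_E)$ at the primes $\ell\mid abc$, force a curve $\Q$-isogenous to $E$ with rational torsion contradicting Mazur, or confine $j(E)$ to a short list incompatible with the congruences on $(a,b,c)$. Ribet's level-lowering theorem then gives a weight-$2$ newform $f$ of trivial character and level $N\mid 2^{\alpha'}3^{\beta'}$ with $\rhobar_{E,p}\isom\rhobar_{f,\mathfrak p}$ for some prime $\mathfrak p\mid p$ of its Hecke ring; there are only finitely many such $f$, read off from modular-symbol tables.

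It remains to eliminate every $f$. If the coefficient field of $f$ is strictly larger than $\Q$, pick auxiliary primes $\ell\nmid 6Np$: the reduction of $E$ at $\ell$, and hence $a_\ell(E)$, depends only on $(a,b)\bmod\ell$, so as $(a,b)$ ranges over the residues with $a^3+b^3$ a $p$-th power modulo $\ell$ (allowing $a_\ell(E)=\pm(\ell+1)$ when $\ell\mid abc$), the congruence $a_\ell(E)\equiv a_\ell(f)\bmod\mathfrak p$ forces $p$ to divide an explicit nonzero integer, and finitely many $\ell$ eliminate all such $f$ for $p\geq 17$. If $f$ corresponds to an elliptic curve $E'/\Q$ of small conductor, so that $E[p]\isom E'[p]$ as $\GalQ$-modules, one argues locally at $2$ and $3$: when $E'$ and $E$ have potentially good reduction of different type at such a prime $\ell$, the inertia images $\rhobar_{E,p}(I_\ell)$ and $\rhobar_{E',p}(I_\ell)$ cannot be reconciled — but deciding this needs to know whether $E[p]\isom E'[p]$ is \emph{symplectic} or \emph{anti-symplectic}, which a Kraus--Halberstadt-type criterion at $2$ provides; whereas when $E'$ has $j=0$ (CM by $\Q(\sqrt{-3})$), the hypothesis that $-3$ be a non-square modulo $p$ makes $\rhobar_{E',p}(I_3)$ and the projective image too large to match $\rhobar_{E,p}(I_3)$, which is constrained by $\ord_3(c)\geq 1$. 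In Chen--Siksek this decisive elimination is carried out instead by the analogous local argument at $\ell=5$ (and at $7,13$), which succeeds precisely for $p$ in residue classes such as $p\equiv 2,3\bmod 5$; these congruences cut out the density-$0.681$ set $\mathcal{L}$. Finally, $17\leq p<10^4$ is Theorem~\ref{T:Kraus}; for $10^4\leq p\leq 10^7$ one verifies prime by prime that the divisibility conditions above fail for some auxiliary $\ell$ — a large but finite Hecke-eigenvalue computation — and $p\in\{3,5,7,11,13\}$ are dispatched by classical descent or explicit versions of the argument. The main obstacle is precisely the elimination of the rational newforms, above all the $j=0$ curve: this is what forces the quadratic-residue hypotheses on $p$ and demands the delicate comparison of potentially good reduction at $2$ (the symplectic criterion) and at $3$.
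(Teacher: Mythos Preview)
The paper does not prove this theorem; it is quoted as background, attributed to Chen--Siksek \cite{ChenSiksek}, with no argument supplied. There is therefore no proof in the paper to compare your proposal against.

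That said, your sketch misidentifies both the obstruction and the mechanism Chen--Siksek use to overcome it. The newform that survives Kraus' level-lowering is the one attached to the elliptic curve with Cremona label $72a1$ (equivalently, after twisting by $-3$, the curve $24a4$), and this curve has $j$-invariant $2048/3$; it is \emph{not} a CM curve with $j=0$. Your paragraph about ``$-3$ a non-square modulo~$p$ making $\rhobar_{E',p}(I_3)$ too large'' and the invocation of a symplectic criterion at~$2$ describe not Chen--Siksek's argument but the contribution of the \emph{present} paper (Theorems~\ref{T:Main} and~\ref{T:maincrit2}); you have folded the two together.

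What Chen--Siksek actually do to produce the congruence conditions such as $p\equiv 2,3\bmod 5$ is not a local-inertia comparison at $5$, $7$, $13$. It is a predictive refinement of Kraus' method: for auxiliary primes $\ell=np+1$ with $n$ small one tests whether any residue pair $(a,b)\bmod\ell$ with $(a^3+b^3)^n\equiv 0,1\bmod\ell$ yields a Frey trace $a_\ell(E_{a,b})$ congruent to $a_\ell(72a1)$ modulo~$p$. By analysing, uniformly in~$p$, which $\ell$ make this test succeed (in terms of splitting in fixed auxiliary number fields), they extract congruence conditions on~$p$ modulo small primes; the union of these gives the set~$\mathcal{L}$ of density~$0.681$, and the range $p\le 10^7$ is then a direct computation. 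Your outline of modularity, irreducibility and level-lowering is broadly correct, but the elimination step --- which is the whole point --- is misdescribed.
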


In this work our main goal is to prove the following theorem.

\begin{theorem} Let $p \geq 17$ be a prime satisfying $(-3/p) = -1$, that is $p \equiv 2$ mod~$3$.
Then equation \eqref{E:gfe} has no non-trivial primitive solutions. 

Therefore, equation \eqref{E:gfe} has no non-trivial primitive solutions for a set of prime exponents
with density approximately $0.844$.
\label{T:Main}
\end{theorem}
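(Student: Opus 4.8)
The plan is to use the standard modular method via the Frey curve attached to signature $(3,3,p)$ and obtain a contradiction by combining modularity, level-lowering, and a symplectic-type criterion at the prime $2$. Let $(a,b,c)$ be a non-trivial primitive solution of \eqref{E:gfe} with $p \geq 17$. By Theorem~\ref{T:Kraus} we may assume $\upsilon_2(a) = 1$, $\upsilon_2(b) = \upsilon_2(c) = 0$, and $\upsilon_3(c) \geq 1$. Attach to this solution the Frey elliptic curve $E = E_{a,b}$ over $\Q$ (one of the curves $y^2 = x^3 + m(a-b)x^2 + n(a^2+ab+b^2)x$ type, as in \cite{darmon-cr} for signature $(3,3,p)$, or the curve used by Kraus and Chen--Siksek). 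Using the known local behaviour of $E$ at $2$ and $3$ dictated by the valuations above, compute its conductor and minimal discriminant, and check that $\rhobar_{E,p}$ is irreducible for $p \geq 17$ (Kraus/Chen--Siksek already establish this). Modularity of $E$ (Wiles et al., or the relevant case over $\Q$) together with Ribet-style level lowering at all primes dividing $c$ produces a newform $f$ of some small explicit level $N$ (a divisor of, say, $2^\alpha 3^\beta$ with small $\alpha,\beta$) and an isomorphism $\rhobar_{E,p} \cong \rhobar_{f,\mathfrak p}$ for a prime $\mathfrak p \mid p$ of the coefficient field.

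Next I would enumerate the finitely many newforms $f$ at the possible levels. In Chen--Siksek's analysis this step leaves a short list; the forms that cannot be eliminated by classical arguments (comparing $a_\ell(E)$ with $a_\ell(f)$ at small auxiliary primes $\ell$, or Kraus-style local conditions at $\ell = p$) correspond, via Eichler--Shimura, to certain elliptic curves $E'/\Q$ of small conductor with potentially good reduction at $2$. For these surviving $E'$ we have a mod-$p$ isomorphism $\rhobar_{E,p} \cong \rhobar_{E',p}$. The new input is to exploit the \emph{symplectic structure} on the $p$-torsion: both $E[p]$ and $E'[p]$ carry the Weil pairing, and the isomorphism is either symplectic or anti-symplectic. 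The paper's criterion "of independent interest" (the result promised in the abstract, proved earlier in the paper) determines, for the relevant type of potentially good reduction at $2$, which of the two cases holds; comparing this with the symplectic type forced on the other side yields a parity obstruction.

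Concretely, after level-lowering the Frey curve $E$ has multiplicative reduction at $3$ with $\upsilon_3(\Delta_E) = 3\upsilon_3(c) - $ (something), and its behaviour at $2$ is of the potentially-good type to which the criterion applies; the same holds for each surviving $E'$. The hypothesis $(-3/p) = -1$, i.e.\ $p \equiv 2 \bmod 3$, enters precisely here: it pins down the symplectic type of the isomorphism at $2$ (via the criterion, whose output depends on $p \bmod$ small integers, in particular on whether $-3$ is a square mod $p$) to be \emph{incompatible} with the type forced by the reduction at the multiplicative prime $3$ (by the classical symplectic criterion of Kraus--Oesterl\'e at multiplicative primes, the type there is governed by whether a certain valuation ratio is a square mod $p$, combined with a quadratic residue symbol). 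The contradiction "symplectic and anti-symplectic simultaneously" rules out every surviving $f$, so no solution exists. The density statement then follows because the set $\{p : p \equiv 2 \bmod 3\}$ has Dirichlet density $1/2$ and is, after removing overlaps, disjoint enough from the Chen--Siksek set $\mathcal L$ (one checks the union $\mathcal L \cup \{p \equiv 2 \bmod 3\}$ has density $\approx 0.844$ by an inclusion--exclusion over the governing congruence classes mod $5$, mod $3$, etc.).

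The main obstacle I anticipate is \emph{verifying that the symplectic types genuinely disagree for every surviving newform}: one must (i) correctly identify the potentially-good reduction type of the Frey curve at $2$ so that the paper's criterion applies, (ii) compute the symplectic type on the $E'$ side — which may itself require the criterion, or a separate computation if $E'$ has a different reduction type — and (iii) make sure the residue-symbol bookkeeping at $2$ and at $3$ combine into a genuine contradiction exactly under the hypothesis $p \equiv 2 \bmod 3$ and not, say, only under a stronger congruence. A secondary technical point is handling the primes $p$ (if any) for which the coefficient field of $f$ is larger than $\Q$, where "symplectic type" must be interpreted relative to a choice of $\mathfrak p \mid p$ and one must ensure the argument is independent of that choice. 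Once the criterion from the earlier part of the paper is in hand, steps (i)--(iii) are finite computations, but getting the normalizations consistent is where the real care is needed.
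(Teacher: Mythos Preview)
Your overall strategy matches the paper's: Frey curve, level lowering, then a clash between the symplectic type forced at~$2$ (via Theorem~\ref{T:maincrit2}) and the type forced at~$3$ (via Kraus--Oesterl\'e). However, two concrete points are off and would need to be corrected before the argument closes.

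First, you misplace where $(-3/p)$ enters. The criterion at~$2$ (Theorem~\ref{T:maincrit2}) depends on $(2/p)$, not $(-3/p)$, and in fact in this situation it yields \emph{symplectic unconditionally}: both the Frey curve and the obstruction curve have $\upsilon_2(\Delta_m)=4$, so $\upsilon_2(\Delta_m(E))\equiv\upsilon_2(\Delta_m(E'))\pmod 3$ and the theorem gives symplectic regardless of the value of $(2/p)$. The symbol $(-3/p)$ arises on the \emph{3-adic} side: after arranging multiplicative reduction at~$3$, one finds $\upsilon_3(\Delta_m(E))\equiv -3\pmod p$ while $\upsilon_3(\Delta_m(W))=1$, so Kraus--Oesterl\'e says the isomorphism is symplectic iff $-3$ is a square mod~$p$. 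The contradiction is then symplectic-at-2 versus anti-symplectic-at-3 precisely when $(-3/p)=-1$.

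Second, you are missing a step that the paper actually needs: Kraus' work already reduces the list of surviving newforms to a \emph{single} rational elliptic curve, namely $72a1$, but this curve has only \emph{potentially} multiplicative reduction at~$3$, so Kraus--Oesterl\'e does not apply directly. The paper twists both the Frey curve and $72a1$ by $-3$ (landing on $24a4$) to obtain genuine multiplicative reduction at~$3$ while preserving the isomorphism of mod~$p$ representations; this twist is what makes the 3-adic computation possible and is also why the number $-3$ ultimately appears in the residue symbol. Your worry about forms with non-rational coefficient field does not arise here.
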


A crucial tool for the proof is the following criterion to decide whether 
two elliptic curves having certain type of potentially good reduction 
at 2 admit a symplectic or anti-symplectic isomorphism between their $p$-torsion modules
(see beginning of Section~\ref{S:crit} for the definitions).

Write $\Q_2^{un}$ for the maximal unramified extension of $\Q_2$.

\begin{theorem}
  Let $E/\Q_2$ and $E'/\Q_2$ be elliptic curves with potentially good reduction. 
  Write $L =\Q_2^{\text{un}}(E[p])$ and $L' =\Q_2^{\text{un}}(E'[p])$.
  Write $\Delta_m(E)$ and $\Delta_m(E')$ for the minimal discriminant of $E$ and $E'$ respectively.
  Let $I_2 \subset \Gal(\Qbar_2 / \Q_2)$ be the inertia group.
  
  Suppose that $L = L'$ and $\Gal(L/\Q_2^{\text{un}}) \simeq \SL_2(\F_3)$.
  Then, $E[p]$ and $E'[p]$ are isomorphic $I_2$-modules for all prime $p \geq 3$. Moreover,
\begin{enumerate}
 \item if $(2/p)=1$ then $E[p]$ and $E'[p]$ are symplectically isomorphic $I_2$-modules. 
 \item if $(2/p)=-1$ then $E[p]$ and $E'[p]$ are symplectically isomorphic $I_2$-modules
 if and only if $\upsilon_2(\Delta_m(E)) \equiv \upsilon_2(\Delta_m(E')) \pmod{3}$. 
\end{enumerate}
\label{T:maincrit2}
\end{theorem}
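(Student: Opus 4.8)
The plan is to isolate, by purely representation-theoretic means, a single class $\bar\lambda\in\F_p^\times/(\F_p^\times)^2$ whose triviality is equivalent to the existence of a symplectic isomorphism, and then to compute $\bar\lambda$ by going up to a field of good reduction of the two curves.

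\textbf{Step 1 (reduction to a square class).} Since $p$ is odd, $\Q_2(\mu_p)/\Q_2$ is unramified, so the mod $p$ cyclotomic character is trivial on $I_2$; hence $\det\rhobar_{E,p}|_{I_2}=1$ and $\rhobar_{E,p}(I_2)$ is a subgroup of $\SL_2(\F_p)$ isomorphic to $\SL_2(\F_3)$, and likewise for $E'$. I will check that for every $p\geq 3$ the group $\SL_2(\F_3)$ has a unique $2$-dimensional $\F_p$-representation $V$ that is faithful and has trivial determinant: any faithful $2$-dimensional representation is automatically irreducible (its image, isomorphic to $\SL_2(\F_3)$, has derived length $3$, so it cannot lie in a Borel subgroup, which is metabelian), hence equals the natural representation or one of its two twists by cubic characters, and only the natural one has trivial determinant; this $V$ is moreover absolutely irreducible by the same derived-length argument, and the unique-up-to-scalar invariant bilinear form on $V$ is alternating since the image lies in $\SL_2$. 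Therefore $E[p]\simeq V\simeq E'[p]$ as $I_2$-modules (which already proves the first assertion, and shows the hypothesis $L=L'$ is automatic once both curves have $\SL_2(\F_3)$-inertial type at $2$). The Weil pairings of $E[p]$ and $E'[p]$ are valued in $\mu_p\subset\Q_2^{\text{un}}$ and $I_2$-invariant, again because the cyclotomic character is unramified at $2$; by absolute irreducibility $\Aut_{I_2}(E[p])=\F_p^\times$, so any two $I_2$-isomorphisms $\phi\colon E[p]\to E'[p]$ differ by a scalar, and the scalar $\lambda(\phi)\in\F_p^\times$ by which $\phi$ rescales the Weil pairing is well defined modulo squares; write $\bar\lambda$ for its class. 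Then $E[p],E'[p]$ are symplectically isomorphic iff $\bar\lambda=1$ and anti-symplectically isomorphic iff $\bar\lambda$ is the class of $-1$, so it remains to show: $\bar\lambda=1$ when $(2/p)=1$; and, when $(2/p)=-1$, $\bar\lambda=1$ iff $\upsilon_2(\Delta_m(E))\equiv\upsilon_2(\Delta_m(E'))\pmod 3$.

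\textbf{Step 2 (passage to good reduction).} For $p\geq 3$ the field $L=\Q_2^{\text{un}}(E[p])$ is the minimal field of good reduction of $E$: when $p\geq 5$ because $p\nmid 24=|\SL_2(\F_3)|$, and when $p=3$ because the level-$3$ principal congruence subgroup of $\SL_2(\Z_3)$ is torsion-free, so $\rhobar_{E,3}(I_2)$ still has order $24$. Over $L$ the curve acquires good reduction to the supersingular elliptic curve $E_{\mathrm{ss}}/\overline{\F}_2$, whose automorphism group is exactly $\SL_2(\F_3)$; this matches the hypothesis on $\Gal(L/\Q_2^{\text{un}})$ and is the reason only this reduction can occur (cf.\ the classification of potentially good inertial types at $2$). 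Fix a minimal Weierstrass model of $E$ over $\Q_2$, of discriminant valuation $\delta:=\upsilon_2(\Delta_m(E))$, and a good model over $\mathcal{O}_L$; comparing them over $L$ produces a change of model rescaling the discriminant by a twelfth power $u^{12}$ with $u\in L^\times$, and since $\upsilon_L(2)=e(L/\Q_2)=24$ and the good model has unit discriminant one gets $\upsilon_L(u)=-2\delta$. Likewise one obtains $u'\in L^\times$ with $\upsilon_L(u')=-2\delta'$, $\delta':=\upsilon_2(\Delta_m(E'))$. Since replacing $E$ by an unramified quadratic twist changes neither $\delta$ nor $\bar\lambda$, the models may be normalised freely.

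\textbf{Step 3 (computation of $\bar\lambda$).} The reduction map identifies $E[p]$ with $E_{\mathrm{ss}}[p]$ compatibly with the Weil pairings and with the action of $\Gal(\Qbar_2/L)$; under this identification the $I_2$-action on $E[p]$, which factors through $\Gal(L/\Q_2^{\text{un}})\simeq\SL_2(\F_3)$, becomes the action of $\Aut(E_{\mathrm{ss}})=\SL_2(\F_3)$ conjugated by the twisting datum attached to $u$, and similarly for $E'$ with $u'$. Transporting the Weil pairing through these identifications, $\bar\lambda$ is the square-class of an explicit expression in $u/u'$; the key point is that only the tame quotient $\Gal(L/\Q_2^{\text{un}})/Q_8\simeq\Z/3$ can act by scalars on the line spanned by the pairing (the quaternionic wild inertia $Q_8$ acts through $\SL_2$, hence preserves every invariant form), so $\bar\lambda$ depends on $u/u'$ only through $\upsilon_L(u)-\upsilon_L(u')=-2(\delta-\delta')$ read modulo the ramification of this cubic tame layer, i.e.\ modulo $3$; concretely $\bar\lambda$ is the class of $2^{\varepsilon}$ with $\varepsilon$ even precisely when $3\mid(\delta-\delta')$. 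Since $2\in(\F_p^\times)^2$ exactly when $(2/p)=1$, this gives $\bar\lambda=1$ in case (1), and in case (2) gives $\bar\lambda=1$ iff $\delta\equiv\delta'\pmod 3$, as required.

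\textbf{Main obstacle.} Steps 1 and 2 are essentially formal. The genuine work is Step 3: making precise how the Weil pairing transforms under the twisting datum for the wildly ramified extension $L/\Q_2^{\text{un}}$ with group $\SL_2(\F_3)$ — in effect, computing the relevant ``symplectic character'' of $\Gal(\Qbar_2/\Q_2)$ on a uniformizer of $\Q_2$ — and extracting from it the exponent of $2$ in $\bar\lambda$ as a function of $\delta-\delta'$ modulo $3$. Handling simultaneously the quaternionic wild part $Q_8$, the cubic tame quotient, and the quadratic indeterminacy of the Weil pairing is the delicate point, and it is precisely what forces the minimal discriminant modulo $3$ (rather than, say, the conductor) to be the invariant that records the answer.
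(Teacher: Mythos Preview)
Your Steps~1 and~2 are sound (and Step~1 is in fact a cleaner way to see that $E[p]$ and $E'[p]$ are isomorphic $I_2$-modules than the paper's argument, which proceeds by showing that $\gamma_E$ and $\gamma_{E'}$ differ by an automorphism of $\Aut(\bar{E})$ and then checking that every such automorphism is realised by conjugation in $\GL_2(\F_p)$). The problem is Step~3, which you yourself flag as ``the genuine work'': as written it is not a proof but a heuristic description of what the answer should look like. Two specific assertions are unjustified. First, you claim that $\bar\lambda$ is the class of a power of~$2$; nothing in your argument produces the number~$2$. Second, you claim that $\bar\lambda$ depends on $u/u'$ only through $\upsilon_L(u)-\upsilon_L(u')$ modulo~$3$; the sentence about $Q_8$ acting through $\SL_2$ and hence preserving the form is true but does not establish this, since the question is not how $I_2$ acts on the line spanned by the Weil pairing (it acts trivially) but rather how the element $g\in\GL_2(\F_p)$ realising the comparison automorphism $\alpha\in\Aut(\SL_2(\F_3))$ between $\gamma_E$ and $\gamma_{E'}$ acts on that line, i.e.\ whether $\det(g)$ is a square.

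The paper fills exactly this gap in two pieces. The first is a group-theoretic lemma: if $H\subset\SL_2(\F_p)$ is isomorphic to $\SL_2(\F_3)$, then $N_{\GL_2(\F_p)}(H)/C(\GL_2(\F_p))\simeq\Aut(H)\simeq S_4$, and the elements of the normaliser with non-square determinant are precisely those inducing \emph{outer} automorphisms of $H$; moreover these all have determinant in the class of~$2$ (this is where the Legendre symbol $(2/p)$ enters, via explicit matrices $n_1,n_2$ of determinant~$2$). So part~(1) follows immediately, and for part~(2) one must decide whether $\alpha$ is inner. The second piece is the observation that $\alpha$, being $\gamma_E\circ\gamma_{E'}^{-1}$ at the level of $\Aut(\bar{E})$, is independent of~$p$; one can therefore test inner/outer at the single prime $p=3$, where an explicit formula from Halberstadt--Kraus gives $\gamma_E(\sigma)=T(\omega^{\upsilon_2(\Delta_m(E))})$ for a generator $\sigma$ of the order-$3$ tame quotient, and likewise for $E'$. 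Comparing these yields the discriminant-mod-$3$ criterion. Your sketch does not contain either ingredient, and without them the link between $\bar\lambda$, the number~$2$, and $\delta-\delta'\pmod 3$ remains an assertion.
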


This theorem extends the ideas in \cite[Appendice A]{HK2002} and it is proved in Section~\ref{S:crit}; 
in Section~\ref{S:Main} we use it to establish Theorem~\ref{T:Main}.  

In \cite{FNS23n} we develop further symplecticity criteria and apply them to 
the Generalized Fermat Equation $x^2 + y^3 = z^p$.
  
\subsection*{Idea behind the proof} Our proof of Theorem~\ref{T:Main} builds on Kraus' modular argument \cite{Kraus1998}. 
Indeed, for $p \geq 17$ he attaches to a putative non-trivial primitive solution $(a,b,c)$ of 
\eqref{E:gfe} a Frey elliptic curve 
\[
 E_{a,b} : Y^2 = X^3 + 3abX + b^3 - a^3, \qquad \Delta(E_{a,b}) = -2^4 \cdot 3^3 \cdot c^{2p}
\]
and shows that its mod~$p$ Galois representation $\rhobar_{E_{a,b},p}$ is mostly independent of $(a,b,c)$.
By the now classic modularity, irreducibility and level lowering results over $\Q$ it follows that 
$\rhobar_{E_{a,b},p}$ is isomorphic to $\rhobar_{f,\fp}$ the mod~$\fp$ representation attached to a
rational newform $f$ in a finite list. Finally, among all the possibilities for $f$ Kraus obtains a contradiction 
except for the newform corresponding to the rational elliptic curve with Cremona label $72a1$. 

In particular, following the ideas in \cite{PSS}, Kraus' work implies that the solution~$(a,b,c)$ gives rise to a
rational point on one of the modular curves $X_{72a1}^+(p)$ or~$X_{72a1}^-(p)$;
these curves respectively parameterize elliptic curves with $p$-torsion modules 
symplectically or anti-symplectically isomorphic to the $p$-torsion module of $72a1$.
By applying Theorem~\ref{T:maincrit2} and \cite[Proposition 2]{KO} we will show that 
there are no $2$-adic points in $X_{72a1}^-(p)$ and 3-adic points in $X_{72a1}^{-(-3/p)}(p)$
arising from relevant solutions of \eqref{E:gfe}. In particular, when $(-3/p) = -1$ 
this implies there are no relevant points on $X_{72a1}^{\pm}(p)(\Q)$.

\subsection*{Acknowledgments} I would like to thank Benjamin Matschke, Bartosz Naskr{\k e}cki and Michael Stoll for  
helpful discussions. I also thank Alain Kraus for his comments.

\section{Proof of Theorem~\ref{T:Main}}
\label{S:Main}

Let $(a,b,c)$ be a non-trivial primitive solution to $x^3 + y^3 = z^p$. From Theorem~\ref{T:Kraus} we know that
$\upsilon_2(a) = 1$, $\upsilon_2(b) = 0$, $\upsilon_2(c) = 0$ and $\upsilon_3(c) \geq 1$
and we can attach to it the Frey curve
\[
 E_{a,b} : Y^2 = X^3 + 3abX + b^3 - a^3.
\]
A closer look into Kraus' proof shows also that
the mod~$p$ Galois representation of $E_{a,b}$ has to satisfy 
$\rhobar_{E_{a,b}, p} \sim \rhobar_{W^\prime, p}$,
where $W^\prime$ is the elliptic curve with Cremona label $72a1$. 
Moreover, this possibility is the unique obstruction to conclude that \eqref{E:gfe} has 
no non-trivial primitive solutions. We shall show that $\rhobar_{E_{a,b}, p} \not\sim \rhobar_{W^\prime, p}$ when $(-3/p)=-1$.

Note that
$W^\prime$ has potentially multiplicative reduction at 3, which becomes multiplicative
after twisting by $-3$. Write $E$ and $W$ for the quadratic twists by $-3$ of $E_{a,b}$ and $W^\prime$, 
respectively. Thus we have
\begin{equation}
  \rhobar_{E, p} \sim \rhobar_{W, p},
 \label{E:iso}
\end{equation}
where $W$ has Cremona reference $24a4$ with $j$-invariant $j_W = 2048/3$ and minimal model
\[
 W :  Y^2 = X^3 - X^2 + X.
\]
Since $\upsilon_2(j_W) = 11$ the curve $W$ has potentially good reduction at 2 and it gets good reduction over $L=\Q_2^{un}(W[p])$. The curve $W$ also satisfies 
\[
\upsilon_2(\Delta_m(W)) = 4 \qquad \text{ and } \qquad \upsilon_2(c_4(W)) = 5, 
\]
hence $\Gal(L/\Q_2^{un}) \simeq \SL_2(\F_3)$ by \cite{Kraus1990}. From \eqref{E:iso} the same must be true for $E$, therefore we are under the hypothesis of Theorem~\ref{T:maincrit2}.

From part (2.2) in the proof of \cite[Lemma~4.1]{Kraus1998} we have that 
$E_{a,b}$ is minimal at $2$ and satisfies $\upsilon_2(\Delta_m(E_{a,b})) = 4$.
Hence the same is true for the quadratic twist $E=-3E_{a,b}$ and we have 
$\upsilon_2(\Delta_m(E)) \equiv \upsilon_2(\Delta_m(W)) \pmod{3}$.
We conclude from Theorem~\ref{T:maincrit2} that $E[p]$ and $W[p]$ are
symplectically isomorphic $I_2$-modules for all $p \geq 3$. Since $\rhobar_{W,p}(I_2)$ is
non-abelian, $E[p]$ and $W[p]$ are also symplectically isomorphic as $G_\Q$-modules
by \cite[Lemma~A.4]{HK2002}; moreover, they cannot be simultaneously symplectic and
anti-symplectic isomorphic by Lemma~\ref{L:sympcriteria}.

From \cite[Proposition~2]{KO} applied with the multiplicative prime $\ell=3$
it follows that $E[p]$ and $W[p]$ are symplectically isomorphic if and only if
$\upsilon_3(\Delta_m(W))$ and $\upsilon_3(\Delta_m(E))$
differ multiplicatively by a square modulo $p$. We now compute these quantities. 

One easily checks that $\upsilon_3(\Delta_m(W)) = 1$.

From part (3.1) in the proof of \cite[Lemma~4.1]{Kraus1998} we see that
\[
 \upsilon_3(c_4(E_{a,b})) = 2, \quad \upsilon_3(c_6(E_{a,b})) = 3, \quad \upsilon_3(\Delta(E_{a,b})) = 3 + 2p\upsilon_3(c).
\]
Therefore, the twisted curve $E=-3E_{a,b}$ satisfies
\[
 \upsilon_3(c_4(E)) = 4, \quad \upsilon_3(c_6(E)) = 6, \quad \upsilon_3(\Delta(E)) = 9 + 2p\upsilon_3(c).
\]
Since $\upsilon_3(c) \geq 1$ it follows from Table~II in \cite{pap} that the equation for $E$ is not minimal.
After a change of variables we obtain 
\[
\upsilon_3(c_4) = 0,\qquad \upsilon_3(c_6) = 0, \qquad \upsilon_3(\Delta_m(E)) = -3 + 2p\upsilon_3(c)  
\]
and the model gets multiplicative reduction. Therefore, $E[p]$ and $W[p]$ are symplectically isomorphic if and only if
\[
 1=\upsilon_3(\Delta_m(W)) \equiv u^2 \upsilon_3(\Delta_m(E)) = u^2 (-3 + 2p\upsilon_3(c)) \pmod{p}
\]
which is equivalent to $(-3/p) = 1$. The result follows.

The statement about the density follows by the same computations as in \cite[Section~10]{ChenSiksek} but now 
we also take into account the congruence $p \equiv 2$ mod~$3$.

\section{Symplectic isomorphisms of the $p$-torsion of elliptic curves}
\label{S:crit}

Let $p$ be a prime.
Let $K$ be a field of characteristic zero or a finite field of characteristic $\neq p$
with an algebraic closure $\overline{K}$.
Fix $\zeta_p \in \overline{K}$ a primitive $p$-th root of unity.
For $E$ an elliptic curve defined over~$K$ we write $E[p]$ for its $p$-torsion 
$G_K$-module, $\rhobar_{E,p} \colon G_K \to \Aut(E[p])$ for the corresponding
Galois representation and $e_{E,p}$ for the Weil pairing on~$E[p]$.
We will call an $\F_p$-basis $(P,Q)$ of $E[p]$ \emph{symplectic} if 
$e_{E,p}(P,Q) = \zeta_p$.

Now let $E /K$ and $E'/K$ be two elliptic curves and
$\phi \colon E[p] \to E'[p]$ be an isomorphism of $G_K$-modules.
Then there is an element $r(\phi) \in \F_p^\times$ such that
\[ e_{E',p}(\phi(P), \phi(Q)) = e_{E,p}(P, Q)^{r(\phi)} \quad \text{for all $P, Q \in E[p]$.} \]
Note that for any $a \in \F_p^\times$ we have $r(a\phi) = a^2 r(\phi)$.
We say that $\phi$ is a \textit{symplectic isomorphism} if $r(\phi) = 1$ or,
more generally, $r(\phi)$ is a square in~$\F_p^\times$.
Fix a nonsquare~$r_p \in \F_p^\times$.
We say that $\phi$ is a \textit{anti-symplectic isomorphism} if $r(\phi) = r_p$
or, more generally, $r(\phi)$ is a nonsquare in~$\F_p^\times$.
Finally, we say that $E[p]$ and~$E'[p]$ are
\emph{symplectically isomorphic} (or \emph{anti-symplectically isomorphic}),
if there exists a symplectic (or anti-symplectic) isomorphism of~$G_K$-modules between them.
Note that it is possible that $E[p]$ and~$E'[p]$ are both symplectically
and anti-symplectically isomorphic; this will be the case if and only if
$E[p]$ admits an anti-symplectic automorphism.

We will need the following criterion.

\begin{lemma} \label{L:sympcriteria}
  Let $E$ and $E'$ be two elliptic curves defined over a field $K$
  with isomorphic $p$-torsion.
  Fix symplectic bases for $E[p]$ and $E'[p]$. Let $\phi \colon E[p] \to E'[p]$ be
  an isomorphism of $G_K$-modules and write $M_\phi$ for the matrix representing~$\phi$ with
  respect to the fixed bases.

  Then $\phi$ is a symplectic isomorphism if and only if $\det(M_\phi)$ is a square mod~$p$;
  otherwise $\phi$ is anti-symplectic.

  Moreover, if $\rhobar_{E,p}(G_K)$ is a non-abelian subgroup of~$\GL_2(\F_p)$,
  then $E[p]$ and~$E'[p]$ cannot be simultaneously symplectically and anti-symplectically isomorphic.
\end{lemma}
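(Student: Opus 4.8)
The plan is to unwind the definition of $r(\phi)$ in terms of the Weil pairing and translate it into a determinant computation. First I would recall that the Weil pairing $e_{E,p}$ is alternating and nondegenerate, and that with respect to a symplectic basis $(P,Q)$ of $E[p]$ (so $e_{E,p}(P,Q)=\zeta_p$) it is computed on arbitrary vectors $v = xP+yQ$, $w = zP+wQ$ by $e_{E,p}(v,w) = \zeta_p^{xw-yz} = \zeta_p^{\det(v\mid w)}$, where $(v\mid w)$ denotes the $2\times 2$ matrix with columns the coordinate vectors of $v$ and $w$. Applying $\phi$ and writing everything in the fixed symplectic basis of $E'[p]$, the coordinate vectors of $\phi(v),\phi(w)$ are $M_\phi v, M_\phi w$, so $e_{E',p}(\phi(v),\phi(w)) = \zeta_p^{\det(M_\phi v\mid M_\phi w)} = \zeta_p^{\det(M_\phi)\det(v\mid w)} = e_{E,p}(v,w)^{\det(M_\phi)}$. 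Comparing with the defining relation $e_{E',p}(\phi(P),\phi(Q)) = e_{E,p}(P,Q)^{r(\phi)}$ and using that $(P,Q)$ generate, this gives $r(\phi) \equiv \det(M_\phi) \pmod p$. The first assertion is then immediate from the definitions: $\phi$ is symplectic iff $r(\phi)=\det(M_\phi)$ is a square mod $p$, and anti-symplectic otherwise, these two cases being exhaustive since $\F_p^\times$ is partitioned into squares and nonsquares.

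For the second assertion, suppose $E[p]$ and $E'[p]$ were both symplectically and anti-symplectically isomorphic. Composing a symplectic isomorphism $\phi$ with the inverse of an anti-symplectic one yields a $G_K$-module automorphism $\psi$ of $E[p]$ with $r(\psi)$ a nonsquare, i.e. (by the first part) $\det(M_\psi)$ a nonsquare mod $p$; here $M_\psi$ lies in the centralizer of the image $G\colonequals \rhobar_{E,p}(G_K)$ in $\GL_2(\F_p)$. So it suffices to show that if $G$ is non-abelian then every matrix centralizing $G$ has square determinant. The key step is a structural fact about centralizers in $\GL_2(\F_p)$: the centralizer of a non-abelian subgroup is contained in the scalar matrices $\F_p^\times \cdot I$. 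Indeed, a matrix commuting with a non-scalar matrix $A$ lies in the commutative algebra $\F_p[A]$, which for non-scalar $A$ is a two-dimensional étale or local $\F_p$-algebra whose invertible elements have the form $\alpha I + \beta A$; requiring such an element to also commute with a second element of $G$ not commuting with $A$ (which exists since $G$ is non-abelian) forces $\beta=0$. A scalar matrix $\lambda I$ has determinant $\lambda^2$, which is a square. Hence $\psi$ is forced to be symplectic, a contradiction.

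The main obstacle is the centralizer computation in the second part: one must check carefully that a non-abelian subgroup of $\GL_2(\F_p)$ cannot be centralized by a non-scalar matrix. The cleanest route is the argument just sketched via the algebra $\F_p[A]$ generated by a single non-scalar element $A \in G$; one splits into the cases where the characteristic polynomial of $A$ is separable (so $\F_p[A]$ is $\F_p\times\F_p$ or $\F_{p^2}$, and the full centralizer of $A$ alone is the unit group of this algebra, a maximal torus) or inseparable (so $A$ is $\lambda I + N$ with $N$ nilpotent nonzero, and the centralizer of $A$ alone is $\F_p^\times\cdot I + \F_p\cdot N$). In either case the centralizer of $A$ is abelian, and imposing commutation with a further element of $G$ not lying in this abelian group cuts it down to the scalars. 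Everything else — the Weil-pairing bilinearity computation and the square/nonsquare dichotomy — is routine linear algebra.
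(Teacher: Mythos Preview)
Your proposal is correct and follows essentially the same route as the paper: the Weil-pairing computation giving $r(\phi)=\det(M_\phi)$ is identical, and for the second part both arguments reduce to the fact that the centralizer of a non-abelian subgroup of $\GL_2(\F_p)$ consists of scalar matrices. The only difference is that the paper cites this centralizer fact from \cite[Lemme~A.3]{HK2002}, whereas you supply a direct proof via the algebra $\F_p[A]$ generated by a non-scalar element.
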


\begin{proof}
  Let $P,Q \in E[p]$ and $P',Q' \in E'[p]$ be symplectic bases. We have that
  \[ e_{E',p}(\phi(P),\phi(Q)) = e_{E',p}(P',Q')^{\det(M_\phi)}
                               = {\zeta_p}^{\det(M_\phi)}
                               = e_{E,p}(P,Q)^{\det(M_\phi)},
  \]
  so $r(\phi) = \det(M_\phi)$. This implies the first assertion.

  We now prove the last statement.
  Let $\beta \colon E[p] \rightarrow E'[p]$ be another isomorphism of $G_K$-modules.
  Then $\beta^{-1} \phi = \lambda$ is in the centralizer of~$\rhobar_{E,p}(G_K)$.
  Since $\rhobar_{E,p}(G_K)$ is non-abelian,
  $\lambda$ is represented by a scalar matrix (see \cite[Lemme~A.3]{HK2002}).
  Therefore $\det(M_\beta)$ and~$\det(M_\phi)$ are in the same square class mod~$p$.
\end{proof}

\medskip 

We now introduce notation from \cite[Section 2]{ST1968} and \cite[Appendice A]{HK2002}.
Let $p \neq \ell$ be primes such that $p \geq 3$.
For an elliptic curve $E / \Q_\ell$ with potentially good reduction 
write $L=\Q_\ell^{\text{un}}(E[p])$. Write also $I=\Gal(L/\Q_\ell^{\text{un}})$.
Write $\Ebar$ for the elliptic curve over $\Fbar_\ell$ obtained by 
reduction of a minimal model of $E/L$ and $\varphi : E[p] \rightarrow \overline{E}[p]$ for the 
reduction morphism which is a symplectic isomorphism of $G_L$-modules.
Let $\Aut(\Ebar)$ be the automorphism group of $\Ebar$ over $\Fbar_\ell$ 
and write $\psi : \Aut(\Ebar) \rightarrow \GL(\Ebar[p])$ for the natural
injective morphism. The action of $I$ on $L$ induces an injective morphism 
$\gamma_E : I \rightarrow \Aut(\overline{E})$. Moreover, for $\sigma \in I$
we have
\begin{equation}
 \varphi \circ \rhobar_{E,p}(\sigma) = \psi(\gamma_E(\sigma)) \circ \varphi.
 \label{eqn:gamma}
\end{equation}

The following group theoretical lemma is proved in Section~\ref{S:group}.
For convenience we state it here since it plays a crucial r\^ole in the proof of 
Theorem~\ref{T:maincrit2}. 

\begin{lemma} \label{L:normalizer2}
  Let $p \geq 3$ and $G = \GL_2(\F_p)$. Let $H \subset \SL_2(\F_p) \subset G$ be 
  a subgroup isomorphic to~$\SL_2(\F_3)$.
  Then the group~$\Aut(H)$ of automorphisms of~$H$ satisfies
  \[ N_G(H)/C(G) \simeq  \Aut(H) \simeq S_4, \]
  where $N_G(H)$ denotes the normalizer of $H$ in $G$ and $C(G)$ the center of $G$. Moreover, 
  \begin{enumerate}[\upshape(a)]
    \item if $(2/p) = 1$, then all the matrices in~$N_G(H)$ have square determinant;
    \item if $(2/p) = -1$, then the matrices in~$N_G(H)$ with square determinant
          correspond to the subgroup of~$\Aut(H)$ isomorphic to~$A_4$. 
  \end{enumerate}
\end{lemma}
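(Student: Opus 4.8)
The plan is to prove Lemma~\ref{L:normalizer2} by an explicit analysis of the embedding of $H \simeq \SL_2(\F_3)$ into $\GL_2(\F_p)$. First I would recall the structure of $\SL_2(\F_3)$: it is the binary tetrahedral group, of order $24$, with a unique element of order $2$ (namely $-I$), a normal Sylow $2$-subgroup $Q \simeq Q_8$, and $\SL_2(\F_3)/Q \simeq \Z/3$. Since $H \subset \SL_2(\F_p)$, its center is $\{\pm I\}$, which coincides with $Z := C(G) \cap \SL_2(\F_p)$ restricted appropriately; and $H/\{\pm I\} \simeq A_4$. The key classical facts I will use are: (i) $\Aut(\SL_2(\F_3)) \simeq S_4$ with $\mathrm{Inn}(\SL_2(\F_3)) \simeq \SL_2(\F_3)/\{\pm I\} \simeq A_4$; and (ii) every automorphism of $H$ is realized by conjugation by some element of $\GL_2(\F_p)$ — this is because all faithful $2$-dimensional representations of $\SL_2(\F_3)$ over $\overline{\F_p}$ (for $p \geq 5$; the case $p=3$ is handled separately since then $H = \SL_2(\F_3)$ itself) are conjugate, being twists of the standard one by the unique nontrivial linear character, and an automorphism permutes these so the resulting representations are still conjugate. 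This gives the map $N_G(H) \to \Aut(H)$, with kernel exactly $C_G(H)$.

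**Identifying the centralizer and proving the isomorphism.** Next I would show $C_G(H) = C(G) = \F_p^\times \cdot I$, the scalars. Since $H$ acts absolutely irreducibly on $\F_p^2$ (its $2$-dimensional representation is irreducible over $\overline{\F_p}$ as just noted), Schur's lemma over $\overline{\F_p}$ forces the centralizer to be scalar, and these scalars are defined over $\F_p$. Hence $N_G(H)/C(G) \hookrightarrow \Aut(H) \simeq S_4$. Surjectivity follows from fact (ii) above: given $\theta \in \Aut(H)$, the two embeddings $H \hookrightarrow G$ and $H \xrightarrow{\theta} H \hookrightarrow G$ are conjugate in $\GL_2(\overline{\F_p})$, and since the character of the $2$-dimensional representation is $\F_p$-valued (its values are $2, -2, 0, \pm 1$ on the conjugacy classes) the conjugating element can be taken in $\GL_2(\F_p)$ by the standard descent argument (a $G_{\F_p}$-stable set of conjugators is nonempty and is a torsor under the centralizer, which by Lang/Hilbert~90 for $\GL_n$ has trivial $H^1$). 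This element normalizes $H$ and induces $\theta$, giving $N_G(H)/C(G) \simeq \Aut(H) \simeq S_4$.

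**The determinant computation, parts (a) and (b).** This is where the arithmetic of $p$ enters, and I expect it to be the main obstacle — or rather the part requiring genuine care. The determinant gives a homomorphism $\det \colon N_G(H) \to \F_p^\times$, and composing with $\F_p^\times \twoheadrightarrow \F_p^\times/(\F_p^\times)^2 \simeq \{\pm 1\}$ yields a homomorphism $N_G(H) \to \{\pm 1\}$. Scalars $\lambda I$ have determinant $\lambda^2$, always a square, so this factors through $N_G(H)/C(G) \simeq S_4$, giving a homomorphism $\epsilon \colon S_4 \to \{\pm 1\}$. There are exactly two such: the trivial one and the sign character (the kernel of the latter is $A_4$). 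So I need to decide which one occurs, and this is precisely the dichotomy $(2/p) = \pm 1$. To pin it down I would exhibit a concrete element: $H$ contains elements of order $3$ and order $4$; the normalizer $N_G(H)$, mapping onto $S_4$, must contain an element mapping to a transposition in $S_4$. Such an element can be taken to normalize a chosen order-$4$ cyclic subgroup $\langle g \rangle$ of $Q_8 \subset H$ (a transposition in $S_4 = \Aut(H)/\mathrm{Inn}$... more precisely one picks the outer automorphism swapping two of the three order-$4$ cyclic subgroups of $Q_8$) — conjugation by a generator $n$ of such a subgroup sends $g \mapsto g^{-1}$. Diagonalizing $g$ over $\F_p$ or $\F_{p^2}$ (its eigenvalues are primitive $4$th roots of unity, i.e. $\pm i$, lying in $\F_p$ iff $p \equiv 1 \pmod 4$), the element $n$ swaps the two eigenlines, hence is anti-diagonal with determinant equal to $-(\text{eigenvalue})\cdot(\text{eigenvalue})^{-1}$-type expression; a direct computation shows $\det n$ is a square exactly when $2$ is a square mod $p$.

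**Finishing.** Concretely, I would make the last computation fully explicit by fixing a model: take $H$ generated by $g = \begin{pmatrix} 0 & -1 \\ 1 & 0 \end{pmatrix}$ (order $4$) and $h = \begin{pmatrix} 1 & 1 \\ 1 & -1 \end{pmatrix}$ (which has order dividing $8$; one checks $h^2 = 2I$ so $\det h = -2$ and $h$ has order $8$ times a scalar — adjust to get an honest order-$3$ element of $\mathrm{PSL}$ lifting), following the standard presentation of $\SL_2(\F_3)$ inside $\SL_2(\F_p)$. Then the element $h$ itself, or a suitable product, represents a transposition in $S_4$ modulo center and scalars and has $\det = -2$ up to squares. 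Since $-1$ is a square mod $p$ iff $p \equiv 1 \pmod 4$ and combining with quadratic reciprocity for $2$, one gets $\det h \equiv -2$ is a square mod $p$ $\iff$ $(2/p) = 1$ (after accounting for $(-1/p)$; the clean statement $(2/p)$ emerges because $(-2/p) = (2/p)(-1/p)$ and the relevant transposition-representative's determinant works out to $\pm 2$ in the square class that tracks $(2/p)$ exactly — I would verify the sign by the explicit matrices). Hence $\epsilon$ is trivial on all of $S_4$ when $(2/p)=1$, proving (a); and $\epsilon$ is the sign character when $(2/p) = -1$, so the square-determinant elements of $N_G(H)$ are exactly those lying over $\ker(\mathrm{sgn}) = A_4 \subset S_4 \simeq \Aut(H)$, proving (b). The main obstacle throughout is keeping the bookkeeping between $N_G(H)$, its image $S_4$ in $\Aut(H)$, and the determinant-square-class character straight, and especially tracking the factor of $-1$ so that the final answer is governed by $(2/p)$ and not $(-2/p)$ or $(-1/p)$; this is purely a matter of choosing explicit generators and computing, but it must be done carefully.
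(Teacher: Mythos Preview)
Your overall strategy matches the paper's: establish $C_G(H) = C(G)$, prove $N_G(H)/C(G) \simeq \Aut(H) \simeq S_4$, then study the determinant-square-class character $S_4 \to \{\pm 1\}$, whose kernel is $A_4$ or $S_4$ since the image of $H$ is $A_4 \subset S_4$ and $H \subset \SL_2$. Your surjectivity argument is more conceptual than the paper's explicit one, but contains a misstatement: $\SL_2(\F_3)$ has \emph{two} nontrivial linear characters and three $2$-dimensional irreducibles. The correct fix is that the hypothesis $H \subset \SL_2(\F_p)$ forces the embedding to be the unique $2$-dimensional representation with trivial determinant, whose character values $2,-2,0,-1,1$ are rational and invariant under $\Aut(H)$; then your conjugacy-plus-descent argument goes through. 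The paper instead fixes an explicit model $H = \langle g_1, g_2, g_3 \rangle$ (built from $\alpha,\beta$ with $\alpha^2+\beta^2=-1$) and exhibits two concrete normalizing matrices $n_1, n_2$ which, together with $H$, visibly generate a group of order $24$ modulo the center.

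The genuine gap is the determinant computation. You correctly reduce (a) and (b) to evaluating $\det$ modulo squares on a single representative of $S_4 \setminus A_4$, but your explicit attempt does not close: your matrix $h = \bigl(\begin{smallmatrix} 1 & 1 \\ 1 & -1 \end{smallmatrix}\bigr)$ has $\det h = -2$, does not lie in $\SL_2(\F_p)$, and you neither check that it normalizes your $H$ nor that its image in $S_4$ lies outside $A_4$; the proposal then leaves the $(2/p)$ versus $(-2/p)$ ambiguity unresolved. This is not a bookkeeping detail---it is the entire arithmetic content of the lemma. The paper settles it by computing $\det n_1 = \det n_2 = 2$ for its explicit normalizing matrices $n_1 = \bigl(\begin{smallmatrix} 1 & -1 \\ 1 & 1 \end{smallmatrix}\bigr)$ and $n_2 = \bigl(\begin{smallmatrix} \alpha & \beta-1 \\ \beta+1 & -\alpha \end{smallmatrix}\bigr)$; since these together with $H$ generate $N_G(H)$ modulo center, at least one lies outside $A_4$, and its determinant being $2$ (not $-2$) pins down $(2/p)$ as the governing symbol. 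To finish your argument you must perform an equivalent explicit computation: fix an honest model of $H$ inside $\SL_2(\F_p)$, exhibit a genuine element of $N_G(H)$ outside $H \cdot C(G)$, and compute its determinant.
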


\begin{proof}[Proof of Theorem~\ref{T:maincrit2}] 
Let $E, E'$ be elliptic curves as in the statement.
Note that $L=\Q_2^{\text{un}}(E[p])$ is the smallest extension of $\Q_2^{\text{un}}$ where $E$ obtains good reduction
and the reduction map $\varphi$ is an isomorphism between the $\F_p$-vector spaces $E[p](L)$ and $\Ebar[p](\Fbar_2)$.
By hypothesis $E'$ also has good reduction over $L$ and the same is true for $\varphi'$. Applying 
equation \eqref{eqn:gamma} to both $E$ and $E'$ we see that $E[p]$ and $E'[p]$ are isomorphic $I_2$-modules if we show that
$\psi \circ \gamma_E$ and $\psi \circ \gamma_{E'}$ are isomorphic as representations into $\GL(\Ebar[p])$ and $\GL(\overline{E'}[p])$,
respectively. 

We have that $j(\Ebar) = j(\overline{E'}) = 0$ (see the proof of \cite[Thereom~3.2]{DD2015}) thus $E$ and $E'$ are isomorphic
over $\Fbar_\ell$. So we can fix minimal models of $E/L$ and $E'/L$ both reducing to the same $\Ebar$. 
Write $H:= \Aut(\Ebar)$ and note that $H \simeq \SL_2(\F_3)$ (see \cite[Thm.III.10.1]{SilvermanI}). 
Therefore $\psi(\gamma_E(I)) = \psi(\gamma_{E'}(I)) = \psi(H) \subset \SL(\Ebar[p]) \subset \GL(\Ebar[p])$ and there must be an automorphism $\alpha \in \Aut(\psi(H))$ such that
$\psi(\gamma_E) = \alpha \circ \psi(\gamma_{E'})$. The first statement of Lemma~\ref{L:normalizer2} shows there is 
$g \in \GL(\Ebar[p])$ such that $\alpha(x) = gxg^{-1}$ for all $x \in \psi(H)$; thus $\psi \circ \gamma_E$ and $\psi \circ \gamma_{E'}$ are isomorphic representations. 

Fix a symplectic basis of $\Ebar[p]$ identifying $\GL(\overline{E}[p])$ with $\GL_2(\F_p)$. 
Let $M_g$ denote the matrix representing $g$ and observe that $M_g \in N_{\GL_2(\F_p)}(\psi(H))$.
Lift the fixed basis to bases of $E[p]$ and $E'[p]$ via the corresponding reduction maps $\varphi$ and $\varphi'$. The lifted bases are symplectic. Write $M_\varphi$ and $M_{\varphi'}$ for the matrices representing $\varphi$ and $\varphi'$ on these bases, respectively. 

Set $M:=M_{\varphi'}^{-1} M_g M_\varphi$. From \eqref{eqn:gamma} it follows that $\rhobar_{E,p}(\sigma) = M^{-1}\rhobar_{E',p}(\sigma)M$ for all $\sigma \in I$. Moreover, $M$ represents some $I_2$-modules isomorphism $\phi : E[p] \rightarrow E'[p]$ and from Lemma~\ref{L:sympcriteria} we have that $E[p]$ and $E'[p]$ are symplectically isomorphic if and only if $\det(M)$ is a square mod~$p$. Since $\varphi$ and $\varphi'$ are symplectic isomorphisms of $G_L$-modules
Lemma~\ref{L:sympcriteria} implies the determinants of $M_\varphi$ and $M_{\varphi'}$ are squares mod~$p$. Therefore
$E[p]$ and $E'[p]$ are symplectically isomorphic if and only if $\det(M_g)$ is a square mod~$p$.

Part (1) now follows from Lemma~\ref{L:normalizer2} (a). 

We now prove (2). From Lemma~\ref{L:normalizer2} (b)
we see that $E[p]$ and $E'[p]$ are symplectically isomorphic if and only if $\alpha$ is an automorphism in $A_4 \subset \Aut(\psi(H)) \simeq S_4$. Note that these are precisely the inner automorphisms. 
For each $p$ the map $\alpha_{p} := \psi^{-1}\circ\alpha\circ\psi$ defines an automorphism of $\gamma_E(I)=H=\Aut(\Ebar)$ satisfying $\alpha_{p} \circ \gamma_{E'} = \gamma_E$. Since $\gamma_E$, $\gamma_{E'}$ 
are surjective and independent of $p$ it follows that $\alpha_{p}$ is the same for all $p$.
Since $\alpha$ and $\alpha_p$ are simultaneously inner or not it follows this property is independent
of the prime $p$ satisfying $(2/p) = -1$. This shows that $E[p]$ and $E'[p]$ are symplectically
isomorphic $I_2$-modules if and only if $E[\ell]$ and $E'[\ell]$ are symplectically
isomorphic $I_2$-modules for one (hence all) $\ell$ satisfying $(2/\ell) = -1$. 

We are left to show that symplecticity 
is equivalent to $\upsilon_2(\Delta_m(E)) \equiv \upsilon_2(\Delta_m(E')) \pmod{3}$. 

Since $(2/3) = -1$ from the observation above we can work with $p=3$. 

Fix $\omega \in \overline{\F}_2$ a primitive cubic root of unity.
Let $L_3 \subset L$ be an extension of $\Q_2^{un}$ of degree 8. Hence $L/L_3$ is cyclic
of degree 3 and we write $\sigma$ for a generator of $G = \Gal(L/L_3) \subset I$.
Thus $\gamma_E(G)$ and $\gamma_{E'}(G)$ are order 3 subgroups of $\Aut(\overline{E})$. 

Recall that $\psi : \Aut(\overline{E}) \rightarrow \GL(\overline{E}[3])$ is the natural injective morphism. 
After fixing a symplectic basis for $\overline{E}[3]$, conjugation by an element of $\SL_2(\F_3)$ (which preserves the property of a basis of $\overline{E}[3]$ being symplectic) allows to assume that $\psi(\gamma_E(G))$ is the group generated by
$\begin{pmatrix} 1 & 1 \\ 0 & 1 \end{pmatrix}$. In particular, $E$ has a 3-torsion point defined over $L_3$.

By doing the same for $E'$ we obtain
$\psi(\gamma_E(\sigma)) = M_g \psi(\gamma_{E'}(\sigma)) M_g^{-1}$, where $M_g$ belongs to the normalizer $N = N_{\GL_2(\F_3)}(\psi(\gamma_E(G)))$. Since the centralizer of $\psi(\gamma_E(\sigma))$ consists precisely of the elements of $N$ with square determinant it follows
that
\[
 \gamma_E(\sigma) = \gamma_{E'}(\sigma) \Leftrightarrow E[3] \simeq E'[3] \text{ symplectically}.
\]
We can further assume that the residual curve $\Ebar$ is of the following form
\[
\overline{E}  \; : \; y^2 + a_3 y = x^3 + a_4 x + a_6, \quad a_i \in \overline{\F}_2,\quad a_3 \ne 0.
\]

For such a model the elements of order 3 in $\Aut(\overline{E})$ are the linear transformations $ T(u) : (x,y) \mapsto (u^2 x, u^3 y)$, where $u = \omega^k$ for $k=0,1,2$. Since $E$ has a 3-torsion point defined over $L_3$, the same argument leading to equation (17) in \cite{HK2002} applies (possibly after replacing $\sigma$ by $\sigma^2$). Thus $\gamma_E(\sigma) = T(\omega^{\upsilon(\Delta_m(E))})$.
By doing the same for $E'$ we get $\gamma_{E'}(\sigma) = T(\omega^{\upsilon(\Delta_m(E'))})$ and the result follows.
\end{proof}

\subsection{A lemma in group theory} \strut
\label{S:group}
Write $S_n$ and $A_n$ for the symmetric and alternating group on $n$ elements, respectively.
We write $C(G)$ for the center of a group $G$.
If $H$ is a subgroup of $G$, then we write $N_G(H)$ for its normalizer
and $C_G(H)$ for its centralizer in~$G$.

\begin{lemma} 
  Let $p \geq 3$ and $G = \GL_2(\F_p)$. Let $H \subset \SL_2(\F_p) \subset G$ be 
  a subgroup isomorphic to~$\SL_2(\F_3)$.
  Then the group~$\Aut(H)$ of automorphisms of~$H$ satisfies
  \[ N_G(H)/C(G) \simeq  \Aut(H) \simeq S_4. \]
  Moreover, 
  \begin{enumerate}[\upshape(a)]
    \item if $(2/p) = 1$, then all the matrices in~$N_G(H)$ have square determinant;
    \item if $(2/p) = -1$, then the matrices in~$N_G(H)$ with square determinant
          correspond to the subgroup of~$\Aut(H)$ isomorphic to~$A_4$. 
  \end{enumerate}
\end{lemma}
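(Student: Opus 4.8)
The plan is to work with the concrete model $H \cong \SL_2(\F_3)$ sitting inside $\SL_2(\F_p)$ and to analyze its normalizer by understanding automorphisms of $\SL_2(\F_3)$. First I would recall the basic structure of $\widetilde{A}_4 := \SL_2(\F_3)$: it has a unique (hence characteristic) Sylow $2$-subgroup $Q \cong Q_8$, and $H/C(H) \cong A_4$ where $C(H) = \{\pm I\}$. From this one reads off $\Aut(H)$: any automorphism must preserve $Q$ and $C(H)$, and a standard computation (or the fact that $\Aut(Q_8) \cong S_4$ together with the compatibility forced by the order-$3$ elements permuting the three cyclic subgroups of $Q_8$) gives $\Aut(H) \cong S_4$, with $\Inn(H) \cong H/C(H) \cong A_4$ the inner automorphisms and the outer quotient of order $2$. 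The outer automorphism can be realized concretely by conjugation in $\GL_2(\F_3)$ by a matrix of determinant a nonsquare — e.g. $\mathrm{diag}(1,-1)$ or a suitable off-diagonal matrix — which I will want to track explicitly because it feeds into parts (a) and (b).

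Next I would establish the isomorphism $N_G(H)/C(G) \cong \Aut(H)$. The conjugation action gives a homomorphism $N_G(H) \to \Aut(H)$ whose kernel is $C_G(H)$. The key point is that $C_G(H) = C(G)$, i.e. the centralizer of $H$ in $\GL_2(\F_p)$ consists only of scalars: this holds because $H$ acts absolutely irreducibly on $\F_p^2$ (an order-$3$ element of $\SL_2(\F_3) \subset \SL_2(\F_p)$ has distinct eigenvalues over $\overline{\F}_p$ and no common eigenvector with $Q_8$), so by Schur's lemma its centralizer in $M_2(\overline{\F}_p)$ is $\overline{\F}_p \cdot I$, forcing $C_G(H) = \F_p^\times \cdot I = C(G)$. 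For surjectivity of $N_G(H) \to \Aut(H)$: the inner automorphisms of $H$ are visibly hit (by elements of $H$ itself), and the outer automorphism is hit by the explicit $\GL_2(\F_3)$-conjugation described above, which normalizes $\SL_2(\F_3)$ and hence $H$. This gives $N_G(H)/C(G) \cong \Aut(H) \cong S_4$.

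For the determinant statements, I would use the induced map $\det \colon N_G(H)/C(G) \to \F_p^\times/(\F_p^\times)^2 \cong \{\pm 1\}$ — this is well-defined since $\det$ of a scalar $\lambda I$ is $\lambda^2$, a square. This is a homomorphism from $S_4$ to $\Z/2$, so it is either trivial or the sign character (the unique index-$2$ subgroup of $S_4$ being $A_4$). To decide which: elements of $H \subset \SL_2(\F_p)$ have determinant $1$, so all of $\Inn(H) = A_4$ maps to $1$; thus the question is only whether the outer class has square determinant. Taking the explicit outer representative, its determinant is $-1$ times a square (it arises from a $\GL_2(\F_3)$ matrix of determinant $-1$, and multiplying by scalars in $\F_p^\times$ changes this only by squares), so the outer class has square determinant if and only if $-1$ is a square mod $p$, i.e. $p \equiv 1 \pmod 4$. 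The final step is to reconcile this with the stated trichotomy in $2$: one checks that $(-1/p)$ and $(2/p)$ are linked through $(-2/p)$ — but more directly, the correct invariant governing whether the outer automorphism can be realized by a \emph{square-determinant} matrix is $(2/p)$, coming from the fact that an order-$8$ element normalizing $Q_8$ inside $\GL_2$ has determinant involving $\sqrt{2}$ or $\sqrt{-2}$; concretely one exhibits a matrix $N \in \GL_2(\F_p)$ with $N^2 \in H$, realizing the outer automorphism, and computes $\det N = \pm 2$ up to squares. Hence the outer class has square determinant exactly when $(2/p) = 1$, giving (a), and when $(2/p) = -1$ the square-determinant elements of $N_G(H)$ are exactly those mapping into $\Inn(H) \cong A_4$, giving (b).

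The main obstacle I anticipate is pinning down precisely which quadratic character controls the outer class — naively one might expect $(-1/p)$ from the bare $\GL_2(\F_3)$ computation, but the normalizer $N_G(H)$ in $\GL_2(\F_p)$ is strictly larger than $N_{\GL_2(\F_3)}(\SL_2(\F_3))$, and the "extra room" (scaling by $\F_p^\times$, and genuinely new normalizing elements when $p \equiv \pm 1 \pmod 8$) shifts the relevant invariant to $(2/p)$. Getting this right requires either an explicit order-$8$ normalizing matrix whose square lies in $H$, with determinant computed to be $2$ (or $-2$) up to squares, or a clean conceptual argument via the action on $Q_8 \rtimes$ (order $3$) and the discriminant of the relevant quadratic form; I would lean on the explicit matrix computation to be safe, and cross-check against the known result for $E/\Q_2$ with $\upsilon_2(j) < 0$ and $\upsilon_2(\Delta_m) = 4$ in \cite{Kraus1990} and the statement of Theorem~\ref{T:maincrit2}.
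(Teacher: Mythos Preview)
Your overall architecture matches the paper's: establish $\Aut(\SL_2(\F_3)) \cong S_4$ with inner part $A_4$, show $C_G(H) = C(G)$ so that $N_G(H)/C(G) \hookrightarrow \Aut(H)$, prove surjectivity, and then read off the determinant map $S_4 \to \F_p^\times/(\F_p^\times)^2$ to decide between cases (a) and (b). The paper does exactly this.

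The gap is in your surjectivity and determinant steps, and it is the one you yourself flag. Your proposed outer representative ``arises from a $\GL_2(\F_3)$ matrix of determinant $-1$'' --- but there is no ring map $\F_3 \to \F_p$ for $p \neq 3$, so a $\GL_2(\F_3)$ matrix such as $\mathrm{diag}(1,-1)$ has no meaning inside $\GL_2(\F_p)$, and in particular does not normalize the specific copy of $H$ sitting there. This is why your first pass yields the wrong character $(-1/p)$ and why you then have to backtrack. Your eventual plan (``an explicit order-$8$ normalizing matrix whose square lies in $H$, with determinant computed to be $2$'') is correct and is precisely what the paper does, but you never actually produce the matrix, so both surjectivity and the identification of the character remain unproved in your write-up.

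The paper avoids the detour entirely by first conjugating $H$ into a fixed explicit model: choosing $\alpha,\beta \in \F_p$ with $\alpha^2+\beta^2=-1$, it writes down generators $g_1,g_2,g_3 \in \SL_2(\F_p)$ for $H$, and then exhibits
\[
  n_1 = \begin{pmatrix} 1 & -1 \\ 1 & 1 \end{pmatrix}, \qquad
  n_2 = \begin{pmatrix} \alpha & \beta-1 \\ \beta+1 & -\alpha \end{pmatrix}
\]
in $N_G(H)$, checks that $H$ together with $n_1,n_2$ generate a subgroup of order $24$ in $N_G(H)/C(G)$ (giving surjectivity onto $S_4$), and computes $\det(n_1)=\det(n_2)=2$. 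Since $H \subset \SL_2(\F_p)$ already accounts for the $A_4$ inside $S_4$, the induced map $S_4 \to \F_p^\times/(\F_p^\times)^2$ sends the odd coset to the class of $2$, and (a)--(b) follow immediately. I recommend you replace the $\GL_2(\F_3)$-lift heuristic with this concrete model; the matrix $n_1$ alone (with $n_1^2 = 2\begin{psmallmatrix}0&-1\\1&0\end{psmallmatrix} \in C(G)\cdot H$) is the ``order-$8$ normalizer with $\det = 2$'' you were looking for.
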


\begin{proof} We can write $H$ as $H = \langle i, j, k, u \rangle$ where
\begin{enumerate}
 \item $H_8 = \langle i,j,k \rangle$ is a subgroup isomorphic to the quaternion group; there is no 
 other subgroup of $H$ with order 8, hence $H_8$ is normal in $H$;
 \item $u$ has order 3 and satisies $uiu^{-1}=j$, $uju^{-1}=k$, $uku^{-1}=i$.
\end{enumerate}

Let $\alpha, \beta \in \F_p^\times$ satisfy $\alpha^2 + \beta^2 = -1$
and consider the following elements of $\SL_2(\F_p) \subset G$
  \[ g_1 = \begin{pmatrix} 0 & -1 \\ 1 & 0 \end{pmatrix}, \quad
     g_2 = \begin{pmatrix} \alpha & \beta \\ \beta & -\alpha \end{pmatrix}, \quad 
     g_3 = \frac{1}{2}\begin{pmatrix} \alpha + \beta - 1 & \beta - \alpha - 1 \\ \beta - \alpha + 1& -\alpha-\beta-1 \end{pmatrix}.
  \]

It is known that $H_8$ can be conjugated by an elment $g \in G$ into $\langle g_1, g_2 \rangle $. 
Moreover, we have
$gHg^{-1}  = \langle g_1, g_2, -g_1 g_2, u_g \rangle$ where
$gig^{-1} = g_1$, $gjg^{-1} = g_2$, $gkg^{-1} = -g_1 g_2$, $u_g = gu g^{-1}$.   
One checks that the action by conjugation of $u_g$ and $g_3$ on $\langle g_1, g_2 \rangle$
is equal, therefore $u_g = g_3 \lambda $ for some scalar matrix $\lambda$. 
Since $u_g \in \SL_2(\F_p)$ by taking determinants we see that $\lambda = \pm 1$; 
$\lambda = -1$ is impossible due to order considerations, thus $u_g = g_3$. 
We have shown that we can suppose the generators of $H$ are 
$i = g_1$, $j=g_2$, $k=-g_1 g_2$ and $u = g_3$.

  From \cite[Lemma~A.3]{HK2002} we have $C_G(H) = C(G)$.
  Now the action by conjugation induces a canonical group homomorphism
  $N_G(H) \to \Aut(H)$ with kernel $C_G(H)$, leading to an injection
  $N_G(H)/C(G) \to \Aut(H)$. To see that this map is also surjective (and hence
  an isomorphism), note that $N_G(H)$ contains the matrices
  \[ n_1 = \begin{pmatrix} 1 & -1 \\ 1 & 1 \end{pmatrix} \quad\text{and}\quad
     n_2 = \begin{pmatrix} \alpha & \beta-1 \\ \beta+1 & -\alpha \end{pmatrix}
  \]
  and that the subgroup of~$N_G(H)/C(G)$ generated by the images of~$H$ and of
  these matrices has order~$24$. Since it can be easily checked that $\Aut(\SL_2(\F_3)) \simeq S_4$,
  the first claim follows.

  Note that $A_4$ is the unique subgroup of~$S_4$ of index~2.
  The determinant induces a homomorphism $S_4 \simeq N_G(H)/C(G) \to \F_p^\times/\F_p^{\times 2}$
  whose kernel is either $S_4$ or~$A_4$.
  Since $H \subset \SL_2(\F_p)$ and all matrices in $C(G)$ have square determinant,
  it remains to compute $\det(n_1)$ and~$\det(n_2)$. But $\det(n_1) = 2$ and
  $\Det(n_2) = -\alpha^2 - (\beta - 1)(\beta + 1) = -\alpha^2 - \beta^2 + 1 = 2$
  as well. 
\end{proof}

\end{document}